\begin{document}

\title{New error bounds for linear complementarity problems of Nekrasov matrices and $B$-Nekrasov matrices
\thanks{This work is partly supported by National Natural Science
Foundations of China (11361074), Natural Science Foundations of
Zhejiang Province of China ( LY14A010007) and Natural Science
Foundations of Fujian Province (2016J01028)} }


\author{Chaoqian Li \and Pingfan Dai \and
        Yaotang Li 
}


\institute{Chaoqian Li  \and Yaotang Li \at              School of
Mathematics and Statistics, Yunnan
University, Kunming, Yunnan, 650091, PR China \\
              \email{lichaoqian@ynu.edu.cn,~liyaotang@ynu.edu.cn}           
         \and
        Pingfan Dai          \at School of Mathematics and Statistics, Xian Jiaotong University, Xian, Shaanxi 710049,
P. R. China \at  Department of Information Engineering, Sanming
University, Sanming, Fujian 365004, P. R. China
             \email{ daipf2004@163.com}
}

\date{Received: date / Accepted: date}

\maketitle

\begin{abstract}
New error bounds for the linear complementarity problems are given
respectively  when the involved matrices are Nekrasov matrices and
$B$-Nekrasov matrices. Numerical examples are given to show that the
new bounds are better respectively than those provided by
Garc\'ia-Esnaola and Pe\~na in [15,16] in some cases.
 \keywords{Error bounds \and Linear complementarity problem \and
 Nekrasov matrices \and $B$-Nekrasov matrices  \and $P$-matrices}
\end{abstract}

\section{Introduction}
\label{intro} Linear complementarity problem  LCP$(M, q)$ is to find
a vector $x\in R^{n}$ such that
\begin{equation}\label{neq1.1}
x\geq 0, Mx+q\geq 0, (Mx+q)^Tx=0 \end{equation} or to show that no
such vector $x$ exists, where $M =[m_{ij}]\in R^{n\times n}$ and
$q\in R^n$. The  LCP$(M, q)$  has various applications in the Nash
equilibrium point of a bimatrix game, the contact problem and the
free boundary problem  for journal bearing, for details,  see
\cite{Ber,Co,Mu}.

 The LCP$(M, q)$ has a unique
solution for any $q\in R^n$ if and only if $M$ is a $P$-matrix
\cite{Co}. We here say a matrix $M \in R^{n, n}$ is a $P$-matrix if
all its principal minors are positive. In \cite{Chen0}, Chen and
Xiang gave the following error bound of the LCP$(M, q)$ when $M$ is
a $P$-matrix:
\[ ||x-x^*||_{\infty} \leq \max\limits_{d\in [0,1]^n}||(I -D+DM)^{-1}||_{\infty} ||r(x)||_\infty,\]
where $x^*$ is the solution of the LCP$(M, q)$, $r(x)=\min\{
x,Mx+q\}$, $D=diag(d_i)$ with $0\leq d_i \leq 1$,
$d=[d_1,\cdots,d_n]^T \in [0,1]^n$ denotes $0\leq d_i\leq 1$ for
each $i\in N$, and the min operator $r(x)$ denotes the componentwise
minimum of two vectors. Furthermore, if $M$ is a certain structure
matrix, such as an $H$-matrix with positive diagonals
\cite{Chen0,Chen,Ga0,Ga1,Ga3}, a $B$-matrix \cite{Pena1,Ga}, a
$DB$-matrix \cite{Dai}, an $SB$-matrix \cite{Dai1,Dai2}, a
$B^S$-matrix \cite{Ga2}, an $MB$-matrix \cite{Che}, and a
$B$-Nekrasov matrix \cite{Ga4}, then some corresponding results on
the bound of $\max\limits_{d\in [0,1]^n}||(I -D+DM)^{-1}||_{\infty}$
can be derived; for details, see
\cite{Che,Chen0,Chen,Dai,Dai1,Dai2,Dai3,Ga0,Ga1,Ga2,Ga3}.

In this paper, we  focus on the bound of $\max\limits_{d\in
[0,1]^n}||(I -D+DM)^{-1}||_{\infty}$, and  give its new bounds when
$M$ is a Nekrasov matrix with positive diagonals and  a $B$-Nekrasov
matrix, respectively. Numerical examples are given to show the new
bounds are respectively better than those in \cite{Ga3} and
\cite{Ga4} in some cases.
\section{Error bounds for linear complementarity problems of Nekrasov matrices}
\label{sec:1} Garc\'{i}a-Esnaola and  Pe\~na in \cite{Ga3} provided
the following bound for  $\max\limits_{d\in [0,1]^n}||(I
-D+DM)^{-1}||_{\infty}$, when $M$ is a Nekrasov matrix with positive
diagonals. Here, a matrix $A=[a_{ij}]\in C^{n, n}$ is called a
Nekrasov matrix \cite{Ko,Li0} if for each $i\in N=\{1,2,\ldots,n\}$,
\[|a_{ii}|>h_i(A),\]
where $h_1(A)=\sum\limits_{j\neq 1} |a_{1j}|$ and
$h_i(A)=\sum\limits_{j=1}^{i-1}\frac{|a_{ij}|}{|a_{jj}|}h_j(A)+\sum\limits_{j=i+1}^{n}|a_{ij}|$,
$i=2,3,\ldots,n$.

\begin{theorem}\emph{\cite[Theorem 3]{Ga3}}  \label{G-Nek}  Let $M=[m_{ij}]\in R^{n, n}$
be a Nekrasov matrix with $m_{ii}>0$ for $i\in N$ such that for each
$i=1,2,\ldots, n-1,$ $m_{ij}\neq 0$ for some $j>i$. Let $W =
diag(w_1,\cdots,w_n )$ with $w_i=\frac{h_i(M)}{m_{ii}}$ for
$i=1,2\ldots,n-1$ and $w_n=\frac{h_n(M)}{m_{nn}}+\varepsilon$,
$\varepsilon \in \left(0, 1- \frac{h_n(M)}{m_{nn}}\right)$. Then
\begin{equation} \label{Bound_N1}\max\limits_{d\in [0,1]^n}||(I-D+DM)^{-1}||_{\infty} \leq
\max\left\{\frac{\max\limits_{i\in N}w_i}{\min\limits_{i\in N}s_i},
\frac{\max\limits_{i\in N}w_i}{\min\limits_{i\in N}w_i}
\right\},\end{equation} where for each $i=1,2,\ldots,n-1$,
$s_i=\sum\limits_{j=i+1}^n|m_{ij}| (1-w_j)$ and $s_n=\varepsilon
m_{nn}$.
\end{theorem}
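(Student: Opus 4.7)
My plan is to reduce the bound on $\|(I-D+DM)^{-1}\|_\infty$ to a Varah-type bound on a strictly diagonally dominant (SDD) scaled matrix. The classical fact that links Nekrasov matrices to SDD matrices is that if $W=\operatorname{diag}(w_1,\ldots,w_n)$ with $w_i=h_i(M)/m_{ii}$, then $MW$ is (weakly) SDD: a direct induction using the recursive definition of $h_i(M)$ shows that in row $i<n$ the excess of the diagonal over the off-diagonal row sum of $MW$ equals $\sum_{j>i}|m_{ij}|(1-w_j)$, i.e.\ exactly $s_i$. The row $n$ is however only weakly dominant (it contributes $0$), which is precisely why the theorem perturbs $w_n$ to $h_n(M)/m_{nn}+\varepsilon$; plugging this in yields an excess of $\varepsilon m_{nn}=s_n$ in row $n$, while still keeping $w_n<1$ by the constraint on $\varepsilon$. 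The Nekrasov hypothesis ensures $w_j<1$ for every $j$, so the extra assumption that each row $i<n$ has some $m_{ij}\neq0$ with $j>i$ guarantees $s_i>0$ throughout.

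Next I would transfer this SDD structure from $M$ to $M_D:=I-D+DM$. A direct computation gives $(M_DW)_{ii}=(1-d_i+d_im_{ii})w_i$ and $(M_DW)_{ij}=d_im_{ij}w_j$ for $j\neq i$, so the diagonal excess in row $i$ becomes
\[
(1-d_i+d_im_{ii})w_i-d_i\sum_{j\neq i}|m_{ij}|w_j \;=\; (1-d_i)w_i+d_i\bigl(m_{ii}w_i-\sum_{j\neq i}|m_{ij}|w_j\bigr) \;=\; (1-d_i)w_i+d_is_i.
\]
This is the key identity of the proof: the excess is a convex combination of $w_i$ and $s_i$, and is therefore bounded below by $\min(w_i,s_i)>0$. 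Hence $M_DW$ is SDD for every $D\in\operatorname{diag}[0,1]^n$, which in particular implies it is invertible.

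Then I would apply Varah's bound for SDD matrices to obtain
\[
\|(M_DW)^{-1}\|_\infty \;\le\; \frac{1}{\min_i\bigl((1-d_i)w_i+d_is_i\bigr)} \;\le\; \frac{1}{\min\bigl(\min_iw_i,\,\min_is_i\bigr)},
\]
and combine this with $\|M_D^{-1}\|_\infty=\|W(M_DW)^{-1}\|_\infty\le\|W\|_\infty\|(M_DW)^{-1}\|_\infty=(\max_iw_i)\|(M_DW)^{-1}\|_\infty$. Taking the maximum over $d\in[0,1]^n$ (whose bound is independent of $d$) delivers exactly the claimed estimate \eqref{Bound_N1}.

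The one place where care is required — and what I expect to be the main subtlety — is the treatment of the last row: the natural scaling $w_n=h_n(M)/m_{nn}$ gives $s_n=0$, so $M_DW$ would only be weakly dominant there and Varah's bound would fail. The perturbation by $\varepsilon$ is exactly what restores strict dominance while preserving the identity for $s_i$ with $i<n$ (since $w_n<1$ remains true); verifying this carefully, together with checking that each $s_i$ for $i<n$ is strictly positive under the stated zero-pattern assumption on $M$, is the only nonroutine step.
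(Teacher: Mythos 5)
Your proof is correct: the row-excess identity for $MW$ (giving exactly $s_i$ for $i<n$ and $\varepsilon m_{nn}$ for $i=n$), the convex-combination computation showing the excess of $M_DW$ in row $i$ equals $(1-d_i)w_i+d_is_i\ge\min(w_i,s_i)>0$, and the final Varah-plus-scaling step all check out, with the zero-pattern hypothesis and the constraint on $\varepsilon$ used exactly where they are needed. Note that this paper only quotes the theorem from \cite{Ga3} without reproducing a proof; your argument is essentially the one used in that reference (scale by $W$ to reach a strictly diagonally dominant matrix, then apply the standard bound for $\|(I-D+DM)^{-1}\|_\infty$), so there is nothing substantive to compare.
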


It is not difficult to see that when $M=[m_{ij}]\in R^{n, n}$ is a
Nekrasov matrix with $m_{ij}= 0$ for any $j>i$ and for some $i\in
\{1,2,\ldots,n-1\}$, Theorem \ref{G-Nek} cannot be used to estimate
$\max\limits_{d\in [0,1]^n}||(I-D+DM)^{-1}||_{\infty}$, and that
when $ \varepsilon \rightarrow 0$,
\[ s_n=\varepsilon
m_{nn} \rightarrow 0 ~and ~\min\limits_{i\in N}s_i  \rightarrow 0,\]
which gives  the bound
\[\max\left\{\frac{\max\limits_{i\in N}w_i}{\min\limits_{i\in N}s_i},
\frac{\max\limits_{i\in N}w_i}{\min\limits_{i\in N}w_i}
\right\}\rightarrow +\infty. \] These facts show that in some cases
the bound in Theorem \ref{G-Nek} is not always effective to estimate
$\max\limits_{d\in [0,1]^n}||(I-D+DM)^{-1}||_{\infty}$ when  $M$ is
a Nekrasov matrix with positive diagonals. To conquer these two
drawbacks, we next give a new bound which only depends on the
entries of $M$. Before that, some results on Nekrasov matrices which
will be used later are given as follows.

\begin{lemma} \label{lem1} Let $M=[m_{ij}]\in C^{n, n}$
be a Nekrasov matrix with $m_{ii}>0$ for $i\in N$ and let
$\tilde{M}=I-D+DM=[\tilde{m}_{ij}]$ where  $D=diag(d_i)$ with $0\leq
d_i \leq 1$. Then $\tilde{M}$ is a Nekrasov matrix. Furthermore, for
each $i\in N$, \begin{equation} \label{neq2.1}
\frac{h_i(\tilde{M})}{\tilde{m}_{ii}}\leq
\frac{h_i(M)}{m_{ii}}.\end{equation}
\end{lemma}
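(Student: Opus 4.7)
The plan is to prove the inequality $\frac{h_i(\tilde{M})}{\tilde{m}_{ii}} \leq \frac{h_i(M)}{m_{ii}}$ by induction on $i$, and then derive the Nekrasov property of $\tilde{M}$ as an immediate consequence, since the right-hand side is strictly less than $1$ by assumption on $M$.

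First I would unpack the entries of $\tilde{M}=I-D+DM$: the diagonal entry is $\tilde{m}_{ii}=1-d_i+d_im_{ii}=1+d_i(m_{ii}-1)$, which is strictly positive because $m_{ii}>0$ and $0\leq d_i\leq 1$ (so $\tilde{m}_{ii}\geq\min\{1,m_{ii}\}>0$), while the off-diagonal entries satisfy $|\tilde{m}_{ij}|=d_i|m_{ij}|$ for $j\neq i$. The entire argument will then hinge on the elementary inequality
\[
\frac{d_i\, a}{1+d_i(m_{ii}-1)} \;\leq\; \frac{a}{m_{ii}}
\]
for any $a\geq 0$, which after cross-multiplication reduces to $a(1-d_i)(m_{ii}-1)\cdot(\text{sign check})$, more transparently to $d_i m_{ii}\leq 1+d_i(m_{ii}-1)$, i.e. $d_i\leq 1$. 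I would record this as a preliminary observation since it will be reused at every inductive step.

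For the base case $i=1$, the definition gives $h_1(\tilde{M})=d_1 h_1(M)$, and the preliminary inequality with $a=h_1(M)$ produces $\frac{h_1(\tilde{M})}{\tilde{m}_{11}}\leq\frac{h_1(M)}{m_{11}}$ directly. For the inductive step, assuming the conclusion for all $j<i$, I would expand
\[
h_i(\tilde{M}) \;=\; \sum_{j=1}^{i-1} \frac{d_i|m_{ij}|}{\tilde{m}_{jj}}\,h_j(\tilde{M}) \;+\; \sum_{j=i+1}^{n} d_i|m_{ij}|
\;=\; d_i\!\left(\sum_{j=1}^{i-1}|m_{ij}|\frac{h_j(\tilde{M})}{\tilde{m}_{jj}} + \sum_{j=i+1}^{n}|m_{ij}|\right),
\]
apply the induction hypothesis to each $\frac{h_j(\tilde{M})}{\tilde{m}_{jj}}$, and recognise the resulting bracket as $h_i(M)$ (using $m_{jj}>0$ so $|m_{jj}|=m_{jj}$). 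This yields $h_i(\tilde{M})\leq d_i\,h_i(M)$, and applying the preliminary inequality with $a=h_i(M)$ closes the induction.

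Finally, Nekrasov-ness of $\tilde{M}$ falls out for free: since $M$ is Nekrasov, $\frac{h_i(M)}{m_{ii}}<1$ for every $i$, and the inequality just proved gives $h_i(\tilde{M})<\tilde{m}_{ii}=|\tilde{m}_{ii}|$. I do not anticipate any substantive obstacle; the only subtle point is ensuring one handles the degenerate situation $d_i=0$ (which makes $\tilde{m}_{ii}=1$ and $h_i(\tilde{M})$ trivially zero) and the fact that the recursion for $h_i(\tilde{M})$ at step $i$ references $\tilde{m}_{jj}$ for $j<i$, which is exactly why the induction is set up on the ratios $h_j/\tilde{m}_{jj}$ rather than on $h_j$ alone.
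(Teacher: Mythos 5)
Your proposal is correct and follows essentially the same route as the paper: induction on $i$ for the ratio inequality $h_i(\tilde{M})/\tilde{m}_{ii}\leq h_i(M)/m_{ii}$, using the same elementary bound $d_i/\tilde{m}_{ii}\leq 1/m_{ii}$ at each step, with the Nekrasov property of $\tilde{M}$ then following from $h_i(M)/m_{ii}<1$. No substantive differences.
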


\begin{proof} We prove that (\ref{neq2.1}) holds by mathematical induction, and then (\ref{neq2.1})
immediately implies that  $\tilde{M}$  is a Nekrasov matrix. Note
that
\[\tilde{m}_{ij}=\left\{ \begin{array}{cc}
  1-d_i+d_im_{ij},  &i=j,\\
 d_im_{ij},  &i\neq j.
\end{array} \right.\] Hence, for each $i\in N$,
\[ \frac{d_i}{\tilde{m}_{ii}}= \frac{d_i}{1-d_i+d_im_{ii}} \leq  \frac{1}{m_{ii}}, ~for ~0\leq d_i \leq 1, ~i\in
N.\] Then we have that for  $i=1$,
\begin{eqnarray*}
\frac{h_{1}(\tilde{M})}{\tilde{m}_{11}}&=&
\frac{d_1\sum\limits_{j\neq 1}|m_{ij}|}{1-d_1+m_{11}d_1}\\&\leq &
\frac{\sum\limits_{j\neq 1}|m_{ij}|}{m_{11}}\\
&=&\frac{h_1(M)}{m_{11}}.\nonumber\end{eqnarray*} Now suppose that
(\ref{neq2.1}) holds  for $i=2,3,\ldots,k$ and $k<n$. Since
\begin{eqnarray*} \frac{h_{k+1}(\tilde{M})}{\tilde{m}_{k+1,k+1}}&=&\frac{
\sum\limits_{j=1}^{k}|\tilde{m}_{k+1,j}|\frac{h_{j}(\tilde{M})}{\tilde{m}_{jj}}+\sum\limits_{j=k+2}^{n}
|\tilde{m}_{k+1,j}|}{\tilde{m}_{k+1,k+1}}
\\&\leq& \frac{\sum\limits_{j=1}^{k}|\tilde{m}_{k+1,j}|\frac{h_{j}(M)}{m_{jj}}+\sum\limits_{j=k+2}^{n}
|\tilde{m}_{k+1,j}|}{\tilde{m}_{k+1,k+1}}
 \\&=& \frac{d_{k+1}\left(\sum\limits_{j=1}^{k}|m_{k+1,j}|\frac{h_{j}(M)}{m_{jj}}+\sum\limits_{j=k+2}^{n}
|m_{k+1j}|\right)}{1-d_{k+1}+m_{k+1,k+1}d_{k+1}}\\&\leq&\frac{
\sum\limits_{j=1}^{k}|m_{k+1,j}|\frac{h_{j}(M)}{m_{jj}}+\sum\limits_{j=k+2}^{n}
|m_{k+1j}|  }{m_{k+1,k+1}}\\
\\ &=& \frac{h_{k+1}(M)}{m_{k+1,k+1}},\nonumber\end{eqnarray*} by mathematical induction we have that for each $i\in N$,
(\ref{neq2.1}) holds. Furthermore, the fact that $M$ is a Nekrasov
matrix yields
\[\frac{h_i(M)}{m_{ii}}<
1 ~for ~each~i\in N.\] By (\ref{neq2.1}) we can conclude
that\[\frac{h_i(\tilde{M})}{\tilde{m}_{ii}}< 1 ~for ~each~i\in N,\]
equivalently, $ |\tilde{m}_{ii}|>h_{i}(\tilde{M})$ for each $i\in
N$, consequently, $\tilde{M}$ is a Nekrasov matrix.
\end{proof}

\begin{lemma} \cite[Lemma 3]{Li} \label{lem2}
Let $\gamma > 0$ and $ \eta \geq 0 $. Then for any $x\in [0,1]$,
\begin{equation} \label{eq4} \frac{1}{1-x+\gamma x} \leq  \frac{1}{\min\{\gamma,1\}}\end{equation} and
\begin{equation} \label{eq5} \frac{\eta x}{1-x+\gamma x} \leq  \frac{\eta }{\gamma}.\end{equation}
\end{lemma}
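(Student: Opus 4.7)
The plan is to observe that the denominator appearing in both inequalities, $1-x+\gamma x = 1+(\gamma-1)x$, is a linear (hence monotone) function of $x$ on $[0,1]$ that interpolates the values $1$ at $x=0$ and $\gamma$ at $x=1$. Thus its range on $[0,1]$ is precisely the interval $[\min\{\gamma,1\},\,\max\{\gamma,1\}]$, and in particular it is bounded below by $\min\{\gamma,1\}>0$. Taking reciprocals immediately yields (\ref{eq4}).

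For (\ref{eq5}) I would reduce to an elementary algebraic inequality. The case $\eta=0$ is trivial, so I would assume $\eta>0$; dividing through by $\eta$ and clearing the (strictly positive) denominator, the claim $\frac{\eta x}{1-x+\gamma x}\leq \frac{\eta}{\gamma}$ becomes equivalent to $\gamma x \leq 1+(\gamma-1)x$, which simplifies to $x \leq 1$ and hence is immediate from $x\in[0,1]$.

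I do not expect a genuine obstacle here: the only point requiring any care is the case split hidden in the expression $\min\{\gamma,1\}$ on the right-hand side of (\ref{eq4}), and viewing $1+(\gamma-1)x$ as a convex combination of the endpoints $1$ and $\gamma$ removes the need for an explicit split on the sign of $\gamma-1$. Otherwise both statements are one-line facts, which is consistent with the lemma being quoted verbatim from \cite[Lemma 3]{Li} and used as a black box in the subsequent theorem.
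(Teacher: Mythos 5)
Your proof is correct. The paper gives no proof of this lemma---it is quoted verbatim from \cite{Li} and used as a black box---so there is nothing to compare against; your argument (bounding the linear denominator $1+(\gamma-1)x$ below by the minimum of its endpoint values $1$ and $\gamma$, and reducing (\ref{eq5}) after clearing the positive denominator to $\gamma x\leq 1-x+\gamma x$, i.e.\ $x\leq 1$) is the standard elementary one and is complete.
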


Lemma \ref{lem2} will be used in the proofs of the following lemma
and of Theorem \ref{main1}.

\begin{lemma} \label{lem3} Let $M=[m_{ij}]\in C^{n, n}$
be a Nekrasov matrix with $m_{ii}>0$ for $i\in N$ and let
$\tilde{M}=I-D+DM=[\tilde{m}_{ij}]$ where  $D=diag(d_i)$ with $0\leq
d_i \leq 1$. Then
\begin{equation} \label{nneq2.2} z_i(\tilde{M})\leq
\eta_i(M)\end{equation} and
\begin{equation} \label{neq2.2}
\frac{z_i(\tilde{M})}{\tilde{m}_{ii}}\leq
\frac{\eta_i(M)}{\min\{m_{ii},1\}},\end{equation} where
$z_1(\tilde{M})=\eta_1(M)=1$,
$z_i(\tilde{M})=\sum\limits_{j=1}^{i-1}
\frac{|\tilde{m}_{ij}|}{|\tilde{m}_{jj}|}z_j(\tilde{M})+1,$ and \[
\eta_i(M)=\sum\limits_{j=1}^{i-1}
\frac{|m_{ij}|}{\min\{|m_{jj}|,1\}}\eta_j(M)+1, ~i=2,3\ldots,n.\]
\end{lemma}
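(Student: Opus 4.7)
The plan is to prove both inequalities (\ref{nneq2.2}) and (\ref{neq2.2}) \emph{simultaneously} by induction on $i$, since the inductive step for each bound at index $k+1$ will rely on the bound (\ref{neq2.2}) already being known for indices $j \leq k$.

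For the base case $i=1$, both $z_1(\tilde{M})$ and $\eta_1(M)$ equal $1$, so (\ref{nneq2.2}) is immediate. For (\ref{neq2.2}) one notes that $\tilde{m}_{11} = 1 - d_1 + d_1 m_{11}$, and the inequality $\frac{1}{1-d_1+d_1 m_{11}} \leq \frac{1}{\min\{m_{11},1\}}$ is precisely (\ref{eq4}) of Lemma \ref{lem2}. For the inductive step I would fix $k < n$, assume both inequalities for all $j \leq k$, and expand
\[
z_{k+1}(\tilde{M}) \;=\; d_{k+1}\sum_{j=1}^{k} |m_{k+1,j}|\,\frac{z_j(\tilde{M})}{\tilde{m}_{jj}} \;+\; 1,
\]
using that $\tilde{m}_{k+1,j}=d_{k+1}m_{k+1,j}$ for $j<k+1$ and that $\tilde{m}_{jj}>0$. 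Applying the induction hypothesis (\ref{neq2.2}) to each ratio $z_j(\tilde{M})/\tilde{m}_{jj}$ gives
\[
z_{k+1}(\tilde{M}) \;\leq\; d_{k+1}\,(\eta_{k+1}(M)-1) + 1,
\]
since $\sum_{j=1}^{k}\frac{|m_{k+1,j}|}{\min\{|m_{jj}|,1\}}\eta_j(M)=\eta_{k+1}(M)-1$. As $\eta_{k+1}(M)\geq 1$ and $d_{k+1}\in[0,1]$, this bound is $\leq \eta_{k+1}(M)$, proving (\ref{nneq2.2}) at step $k+1$.

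For (\ref{neq2.2}) at step $k+1$, I would divide the previous estimate by $\tilde{m}_{k+1,k+1}=1-d_{k+1}+d_{k+1}m_{k+1,k+1}$ and split the resulting quotient as
\[
\frac{z_{k+1}(\tilde{M})}{\tilde{m}_{k+1,k+1}} \;\leq\; \frac{d_{k+1}\bigl(\eta_{k+1}(M)-1\bigr)}{1-d_{k+1}+d_{k+1}m_{k+1,k+1}} \;+\; \frac{1}{1-d_{k+1}+d_{k+1}m_{k+1,k+1}}.
\]
The first summand is bounded by $\frac{\eta_{k+1}(M)-1}{m_{k+1,k+1}}\leq \frac{\eta_{k+1}(M)-1}{\min\{m_{k+1,k+1},1\}}$ using (\ref{eq5}) with $\eta=\eta_{k+1}(M)-1$ and $\gamma=m_{k+1,k+1}$, while the second is bounded by $\frac{1}{\min\{m_{k+1,k+1},1\}}$ using (\ref{eq4}). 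Adding the two gives exactly $\frac{\eta_{k+1}(M)}{\min\{m_{k+1,k+1},1\}}$, which closes the induction.

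The only subtlety — and the place where care is needed — is handling the split correctly so that the \emph{same} denominator factor $1-d_{k+1}+d_{k+1}m_{k+1,k+1}$ is treated once by (\ref{eq4}) and once by (\ref{eq5}); one must also remember to pass from $m_{k+1,k+1}$ to $\min\{m_{k+1,k+1},1\}$ in the first summand (using $\min\{m_{k+1,k+1},1\}\leq m_{k+1,k+1}$) to combine the two terms into a single fraction with the correct denominator. Nothing else in the argument requires more than elementary manipulation.
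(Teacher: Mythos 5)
Your proof is correct and follows essentially the same route as the paper: an induction on $i$ in which the bound (\ref{neq2.2}) at indices $j\leq k$ feeds the recursion for $z_{k+1}(\tilde{M})$, with Lemma \ref{lem2} controlling the denominators. The only difference is cosmetic: for (\ref{neq2.2}) at step $k+1$ the paper simply divides (\ref{nneq2.2}) by $\tilde{m}_{k+1,k+1}$ and uses $\frac{1}{\tilde{m}_{ii}}\leq \frac{1}{\min\{m_{ii},1\}}$, so your two-term split via (\ref{eq4}) and (\ref{eq5}) is valid but more work than necessary.
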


\begin{proof} We only prove (\ref{nneq2.2}), and (\ref{neq2.2}) follows from the fact that
\[\frac{1}{\tilde{m}_{ii}}= \frac{1}{1-d_i+d_im_{ii}} \leq \frac{1}{\min\{m_{ii},1\}}~for ~i\in N. \]
Note that
\[z_1(\tilde{M})\leq \eta_1(M).\] We
now suppose that (\ref{nneq2.2}) holds  for $i=2,3,\ldots,k$ and
$k<n$. Since
\begin{eqnarray*}
z_{k+1}(\tilde{M})&=&
\sum\limits_{j=1}^{k}|\tilde{m}_{k+1,j}|\frac{z_j(\tilde{M})}{|\tilde{m}_{jj}|}+1\\
&\leq&\sum\limits_{j=1}^{k}|\tilde{m}_{k+1,j}|\frac{\eta_j(M)}{\min\{m_{jj},1\}}+1\\
&=&d_{k+1}\sum\limits_{j=1}^{k}|m_{k+1,j}|\frac{\eta_j(M)}{\min\{m_{jj},1\}}+1\\
&\leq&\sum\limits_{j=1}^{k}|m_{k+1,j}|\frac{\eta_j(M)}{\min\{m_{jj},1\}}+1\\
&=&\eta_{k+1}(M),\nonumber\end{eqnarray*} by mathematical induction
we have that for each $i\in N$, (\ref{nneq2.2}) holds.
\end{proof}

\begin{lemma} \emph{\cite[Theorem 2]{Ko}}\label{lem4}
Let $A=[a_{ij}]\in C^{n, n}$ be a Nekrasov matrix. Then
\begin{equation} \label{neq2.3} ||A^{-1}||_\infty \leq \max\limits_{i\in
N}\frac{z_i(A)}{|a_{ii}|-h_i(A)},\end{equation}  where $z_1(A)=1$
and $z_i(A)=\sum\limits_{j=1}^{i-1}
\frac{|a_{ij}|}{|a_{jj}|}z_j(A)+1, i=2,3\ldots,n.$
\end{lemma}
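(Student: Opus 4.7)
The plan is to bound $\|A^{-1}\|_\infty$ by first passing to the comparison matrix and then exploiting the recursive definitions of $z_i(A)$ and $h_i(A)$ through an elementary lower/upper triangular splitting.

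First I would invoke the classical fact that a Nekrasov matrix is an $H$-matrix: writing $\langle A\rangle$ for the comparison matrix of $A$ (diagonal entries $|a_{ii}|$, off-diagonal entries $-|a_{ij}|$), one has that $\langle A\rangle$ is a nonsingular $M$-matrix and $|A^{-1}|\leq \langle A\rangle^{-1}$ componentwise. Consequently,
\[\|A^{-1}\|_\infty \;\leq\; \|\langle A\rangle^{-1}\|_\infty \;=\; \|\langle A\rangle^{-1}e\|_\infty,\]
where $e=(1,\ldots,1)^T$ and the last equality uses the nonnegativity of $\langle A\rangle^{-1}$.

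Next I would decompose $\langle A\rangle = P - Q$, with $P$ the lower-triangular matrix whose entries are $|a_{ii}|$ on the diagonal and $-|a_{ij}|$ for $j<i$, and $Q$ the nonnegative matrix carrying $|a_{ij}|$ for $j>i$. Setting $x=\langle A\rangle^{-1}e\geq 0$, the identity $(P-Q)x=e$ rearranges to $x=y+Tx$, where $y=P^{-1}e$ and $T=P^{-1}Q\geq 0$. A short induction on $i$, matching against the recursions defining $z_i(A)$ and $h_i(A)$, gives the key identifications
\[y_i \;=\; \frac{z_i(A)}{|a_{ii}|}, \qquad (Te)_i \;=\; \frac{h_i(A)}{|a_{ii}|} \qquad (i\in N).\]

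The final step is a one-line majorization at the maximal index. Pick $k$ with $x_k=\|x\|_\infty$; since $T\geq 0$ and $x\leq \|x\|_\infty e$ componentwise,
\[x_k \;=\; y_k + (Tx)_k \;\leq\; y_k + x_k(Te)_k \;=\; y_k + x_k\,\frac{h_k(A)}{|a_{kk}|}.\]
The Nekrasov hypothesis ensures $h_k(A)/|a_{kk}|<1$, so solving for $x_k$ yields $x_k \leq y_k/\bigl(1-h_k(A)/|a_{kk}|\bigr) = z_k(A)/\bigl(|a_{kk}|-h_k(A)\bigr)$, which is dominated by $\max_{i\in N} z_i(A)/\bigl(|a_{ii}|-h_i(A)\bigr)$, proving (\ref{neq2.3}).

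The part I expect to require the most care is the opening reduction: verifying that $A$ is an $H$-matrix so that $|A^{-1}|\leq \langle A\rangle^{-1}$ is legitimate, and checking that the equivalent fixed-point equation $x=y+Tx$ really has the $z_i$'s and $h_i$'s encoded in $y$ and $Te$. Once these algebraic identifications are in hand, the estimate itself is just the Perron-style one-line argument at the maximal coordinate.
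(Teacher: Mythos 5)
Your argument is correct and complete: the reduction to the comparison matrix via the $H$-matrix property of Nekrasov matrices, the identifications $y_i=z_i(A)/|a_{ii}|$ and $(Te)_i=h_i(A)/|a_{ii}|$ from the lower-triangular splitting $\langle A\rangle=P-Q$, and the maximal-coordinate estimate all check out. Note that the paper itself gives no proof of this lemma --- it is quoted verbatim from Kolotilina's Theorem 2 --- so there is nothing internal to compare against; your derivation is essentially the standard one from that source (the splitting $P-Q$ is exactly the Gauss--Seidel-type splitting of $\langle A\rangle$ underlying the recursions for $h_i$ and $z_i$), and the only external ingredient you lean on, that a Nekrasov matrix is a nonsingular $H$-matrix so that $|A^{-1}|\leq\langle A\rangle^{-1}$, is indeed classical and is covered by the paper's own references.
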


By Lemmas \ref{lem1}, \ref{lem2}, \ref{lem3} and \ref{lem4}, we can
obtain  the following bound for $\max\limits_{d\in
[0,1]^n}||(I-D+DM)^{-1}||_{\infty}$ when $M$ is a Nekrasov matrix.

\begin{theorem} \label{main1} Let $M=[m_{ij}]\in R^{n, n}$ be a Nekrasov matrix with $m_{ii}>0$ for $i\in N$ and let $\tilde{M}=I-D+DM$ where  $D=diag(d_i)$ with $0\leq d_i \leq 1$. Then
\begin{equation} \label{bound_N2} \max\limits_{d\in [0,1]^n}||\tilde{M}^{-1}||_{\infty} \leq
\max\limits_{i\in N}
\frac{\eta_i(M)}{\min\{m_{ii}-h_i(M),1\}},\end{equation} where
$\eta_i(M)$ is defined in Lemma \ref{lem3}.
\end{theorem}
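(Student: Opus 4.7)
The plan is to apply Lemma \ref{lem4} to $\tilde{M}=I-D+DM$ and then eliminate the dependence on $D$ using Lemmas \ref{lem1}--\ref{lem3}. Since Lemma \ref{lem1} guarantees that $\tilde{M}$ is itself a Nekrasov matrix, Lemma \ref{lem4} yields
\[ ||\tilde{M}^{-1}||_{\infty} \leq \max_{i\in N}\frac{z_i(\tilde{M})}{\tilde{m}_{ii}-h_i(\tilde{M})}, \]
so the task reduces to bounding each ratio by $\eta_i(M)/\min\{m_{ii}-h_i(M),\,1\}$ uniformly in $d\in[0,1]^n$. The numerator is the easy half: the first inequality in Lemma \ref{lem3} gives $z_i(\tilde{M})\leq \eta_i(M)$ immediately, with no remaining dependence on $d$.

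The denominator is where the work lies. The key observation is that every off-diagonal entry $\tilde{m}_{ij}$ ($j\neq i$) carries an explicit factor $d_i$, so one can factor
\[ h_i(\tilde{M}) = d_i\left(\sum_{j=1}^{i-1}|m_{ij}|\,\frac{h_j(\tilde{M})}{\tilde{m}_{jj}} + \sum_{j=i+1}^{n}|m_{ij}|\right). \]
Invoking Lemma \ref{lem1} on each ratio $h_j(\tilde{M})/\tilde{m}_{jj}$ collapses the bracketed sum to exactly $h_i(M)$, yielding the clean estimate $h_i(\tilde{M})\leq d_i\,h_i(M)$. Combined with $\tilde{m}_{ii}=1-d_i+d_i m_{ii}$, this gives
\[ \tilde{m}_{ii}-h_i(\tilde{M}) \geq 1 - d_i + d_i\bigl(m_{ii}-h_i(M)\bigr). \]
Setting $\gamma_i := m_{ii}-h_i(M)>0$ (positive because $M$ is Nekrasov), the first inequality of Lemma \ref{lem2} with $x=d_i$ and $\gamma=\gamma_i$ then gives $1/(\tilde{m}_{ii}-h_i(\tilde{M}))\leq 1/\min\{\gamma_i,\,1\}$, which no longer depends on $d$.

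Multiplying the two bounds, taking the maximum over $i\in N$, and noting that the right-hand side is independent of $d$ completes the proof. The step I expect to require the most care is the auxiliary inequality $h_i(\tilde{M})\leq d_i\,h_i(M)$: it must be extracted by inserting Lemma \ref{lem1}'s pointwise estimate inside the recursive definition of $h_i(\tilde{M})$, in much the same spirit as the induction used to prove Lemma \ref{lem1}. Everything after that is routine substitution and an appeal to Lemma \ref{lem2}.
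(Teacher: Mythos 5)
Your proposal is correct and follows essentially the same route as the paper: apply Lemma \ref{lem4} to $\tilde{M}$ (legitimate by Lemma \ref{lem1}), bound the numerator by $z_i(\tilde{M})\leq\eta_i(M)$ from Lemma \ref{lem3}, and bound the denominator by factoring $d_i$ out of the off-diagonal entries and invoking (\ref{neq2.1}) to get $\tilde{m}_{ii}-h_i(\tilde{M})\geq 1-d_i+d_i\bigl(m_{ii}-h_i(M)\bigr)$, finishing with Lemma \ref{lem2}. The only cosmetic difference is that you isolate the intermediate estimate $h_i(\tilde{M})\leq d_i\,h_i(M)$ as a separate step, whereas the paper carries the same bounds through a single chain of inequalities on the quotient.
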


\begin{proof} Let $\tilde{M}=I-D+DM=[\tilde{m}_{ij}]$. By Lemma \ref{lem1} and Lemma \ref{lem4}, we
have that $ \tilde{M}$ is a Nekrasov matrix, and
\begin{equation} \label{Bod1}  ||\tilde{M}^{-1}||_\infty \leq  \max\limits_{i\in
N}\frac{z_i(\tilde{M})}{\tilde{m}_{ii}-h_i(\tilde{M})}.\end{equation}
Note that
\begin{eqnarray*}
\frac{z_1(\tilde{M})}{\tilde{m}_{11}-h_1(\tilde{M})}&=& \frac{1
}{\tilde{m}_{11}-\sum\limits_{j=2}^n |\tilde{m}_{1j}|}\\&=& \frac{1
}{1-d_1+m_{11}d_1-\sum\limits_{j=2}^n |m_{1j}|d_1}\\&\leq &
\frac{1}{\min\{m_{11}- \sum\limits_{j=2}^n
|m_{1j}|,1\}}\\&=&\frac{\eta_1(M)}{\min\{m_{11}- h_1(M),1\}}
\end{eqnarray*} and for $i=2,3,\ldots,n$, we have by Lemma \ref{lem3} and (\ref{neq2.1}) that
\begin{eqnarray*}
\frac{z_i(\tilde{M})}{\tilde{m}_{ii}-h_i(\tilde{M})}&=&
\frac{\sum\limits_{j=1}^{i-1} |\tilde{m}_{ij}|
\frac{z_j(\tilde{M})}{\tilde{m}_{jj}}+1
}{\tilde{m}_{ii}-\left(\sum\limits_{j=1}^{i-1}
|\tilde{m}_{ij}|\frac{h_j(\tilde{M})}{\tilde{m}_{jj}}+
\sum\limits_{j=i+1}^{n} |\tilde{m}_{ij}|  \right)}\\&\leq&
\frac{\sum\limits_{j=1}^{i-1} |\tilde{m}_{ij}|
\frac{\eta_j(M)}{\min\{m_{jj},1\}}+1
}{\tilde{m}_{ii}-\left(\sum\limits_{j=1}^{i-1}
|\tilde{m}_{ij}|\frac{h_j(M)}{m_{jj}}+ \sum\limits_{j=i+1}^{n}
|\tilde{m}_{ij}|  \right)}
\\&= &\frac{d_i\sum\limits_{j=1}^{i-1} |m_{ij}|
\frac{\eta_j(M)}{\min\{m_{jj},1\}}+1
}{1-d_i+m_{ii}d_i-d_i\left(\sum\limits_{j=1}^{i-1}
|m_{ij}|\frac{h_j(M)}{m_{jj}}+ \sum\limits_{j=i+1}^{n} |m_{ij}|
\right)}\\&\leq& \frac{\sum\limits_{j=1}^{i-1} |m_{ij}|
\frac{\eta_j(M)}{\min\{m_{jj},1\}}+1
}{1-d_i+m_{ii}d_i-d_i\left(\sum\limits_{j=1}^{i-1}
|m_{ij}|\frac{h_j(M)}{m_{jj}}+ \sum\limits_{j=i+1}^{n} |m_{ij}|
\right)}\\&\leq& \frac{\eta_i(M) }{\min\{
 m_{ii}-h_i(M),1\} }.
\end{eqnarray*}
Therefore,  by (\ref{Bod1}) we have \[ ||\tilde{M}^{-1}||_\infty
\leq \max\limits_{i\in
N}\frac{z_i(\tilde{M})}{\tilde{m}_{ii}-h_i(\tilde{M})}\leq
\max\limits_{i\in N} \frac{\eta_i(M)}{\min\{m_{ii}-h_i(M),1\}}.
\] The conclusion follows.
\end{proof}

Remark here that when $m_{ii}=1$ for all $i\in N$ in Theorem
\ref{main1}, then
\[\min\{m_{ii}-h_i(M),1\}= 1-h_i(M),\] which yields the following
result.

\begin{corollary} \label{cor1} Let $M=[m_{ij}]\in R^{n, n}$ be a Nekrasov matrix with $m_{ii}=1$ for $i\in N$
and let $\tilde{M}=I-D+DM$ where  $D=diag(d_i)$ with $0\leq d_i \leq
1$. Then
\[ \max\limits_{d\in [0,1]^n}||\tilde{M}^{-1}||_{\infty} \leq
\max\limits_{i\in N} \frac{\eta_i(M)}{1-h_i(M)}.\]
\end{corollary}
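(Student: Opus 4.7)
The plan is to derive Corollary \ref{cor1} as an immediate specialization of Theorem \ref{main1}, since the hypothesis $m_{ii}=1$ for all $i\in N$ simplifies the denominator $\min\{m_{ii}-h_i(M),1\}$ that appears in the general bound \eqref{bound_N2}. No new induction or auxiliary lemma is needed; the entire argument reduces to substituting $m_{ii}=1$ and checking that the inner minimum collapses to a single branch.

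More concretely, I first observe that $M$ is assumed to be a Nekrasov matrix, so by definition $h_i(M)<m_{ii}=1$ for every $i\in N$. This gives the two-sided estimate $0<1-h_i(M)\le 1$, with the upper bound being trivial because $h_i(M)\ge 0$. Consequently, for each $i\in N$,
\[
\min\{m_{ii}-h_i(M),\,1\}=\min\{1-h_i(M),\,1\}=1-h_i(M).
\]
This identification is the only nontrivial step, and it is really just an observation, so I do not anticipate any obstacle here.

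Second, I invoke Theorem \ref{main1} verbatim: since $M$ is a Nekrasov matrix with positive diagonals (the condition $m_{ii}=1>0$ is satisfied), the bound
\[
\max_{d\in[0,1]^n}\|\tilde M^{-1}\|_\infty\le\max_{i\in N}\frac{\eta_i(M)}{\min\{m_{ii}-h_i(M),1\}}
\]
applies, with $\eta_i(M)$ defined as in Lemma \ref{lem3}. Substituting the identity from the previous paragraph into the right-hand side yields
\[
\max_{d\in[0,1]^n}\|\tilde M^{-1}\|_\infty\le\max_{i\in N}\frac{\eta_i(M)}{1-h_i(M)},
\]
which is exactly the claim of the corollary. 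Since the corollary is a direct specialization, the only thing to write out is the two-line verification that the minimum simplifies under $m_{ii}=1$; there is no genuine difficulty to overcome.
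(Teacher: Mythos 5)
Your proposal is correct and matches the paper's own argument exactly: the paper also obtains the corollary by remarking that $m_{ii}=1$ forces $\min\{m_{ii}-h_i(M),1\}=1-h_i(M)$ (since $0\le h_i(M)<1$) and then applying Theorem \ref{main1}. Nothing further is needed.
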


\begin{example} \label{ex1} Consider the following matrix \[M =
\left[ \begin{array}{cccc}
  5  &-\frac{1}{5}   &-\frac{2}{5} &-\frac{1}{2}\\
 -\frac{1}{10}  &2  &- \frac{1}{2} & -\frac{1}{10}  \\
 -\frac{1}{2}  &-\frac{1}{10}   &1.5 &-\frac{1}{10}\\
 -\frac{2}{5} & -\frac{2}{5} &-\frac{4}{5} & 1.2
\end{array} \right].\]
By computations,
\[ h_1(M)=1.1000 < |m_{11}|,  h_2(M)=0.6220< |m_{22}|,\]
\[ h_3(M)=0.2411< |m_{33}|, ~and~ h_4(M)=0.3410< |m_{44}|.\]
Hence, $M$ is a Nekrasov matrix. The diagonal matrix $ W$ in Theorem
\ref{G-Nek} is given by \[ W=diag\left( 0.2200, 0.3110, 0.1607,
0.2842+\varepsilon \right)\] with $ \varepsilon \in (0,0.7158)$.
Hence, by Theorem \ref{G-Nek} we can get the bound (\ref{Bound_N1})
involved with $\varepsilon\in (0,0.7158) $ for $ \max\limits_{d\in
[0,1]^n}||(I-D+DM)^{-1}||_{\infty}$, which is drawn in Figure
\ref{Fig1}. Furthermore, by Theorem \ref{main1}, we can get that the
bound (\ref{bound_N2}) for $ \max\limits_{d\in
[0,1]^n}||(I-D+DM)^{-1}||_{\infty}$ is $3.6414$. It is easy to see
from Figure \ref{Fig1} that the bound in Theorem \ref{main1} is
smaller than that in Theorem \ref{G-Nek} (Theorem 3 in \cite{Ga3})
in some cases.

\begin{figure}[hbtp]\label{Fig1} \centering
\includegraphics[width=4in]{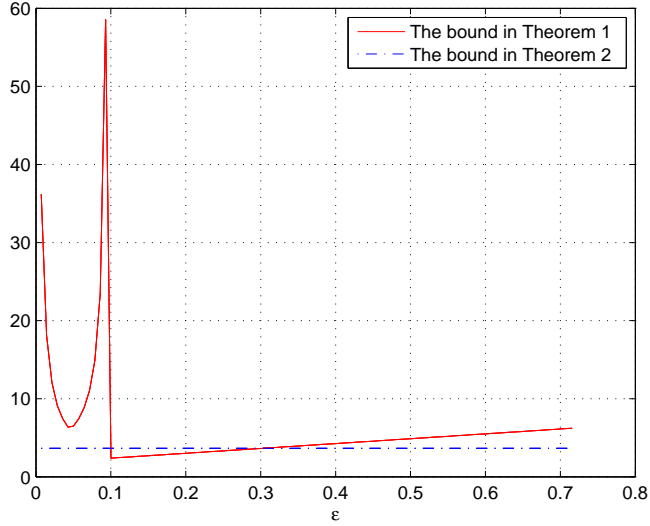}
\caption{The bounds in Theorems \ref{G-Nek} and \ref{main1}}
\end{figure}
\end{example}

\begin{example} \label{ex2} Consider the following Nekrasov  matrix \[M =
\left[ \begin{array}{cccc}
 1 &-\frac{2}{5}   &-\frac{2}{5} &0 \\
 -\frac{1}{2}  &1    &-\frac{1}{4} &-\frac{1}{4}   \\
 -\frac{2}{5}  &-\frac{2}{5}    &1 &0 \\
 -\frac{1}{5} & -\frac{2}{5}  &-\frac{2}{5}  & 1
\end{array} \right].\]
Since $ m_{34}=0$, we cannot use the bound (\ref{Bound_N1})  in
Theorem \ref{G-Nek}. However, by Theorem \ref{main1}, we have
\[\max\limits_{d\in [0,1]^n}||(I-D+DM)^{-1}||_{\infty}\leq 15.\]
\end{example}

\section{Error bounds for linear complementarity problems of $B$-Nekrasov matrices }
\label{sec:2} The class of $B$-Nekrasov matrices is introduced by
Garc\'ia-Esnaola and Pe\~na \cite{Ga4} as a subclass of
$P$-matrices. We say that $M$ is a $B$-Nekrasov matrix if it can be
written as
\begin{equation}\label{dec} M =B^++C,\end{equation} where
\[ B^+ =[b_{ij}]= \left[ \begin{array}{ccc}
 m_{11}-r_1^+    &\cdots     &m_{1n}-r_1^+ \\
 \vdots          &          &\vdots  \\
 m_{n1}-r_n^+     &\cdots    &m_{nn}-r_n^+
\end{array} \right], ~and~C=\left[ \begin{array}{ccc}
 r_1^+    &\cdots     &r_1^+ \\
 \vdots          &          &\vdots  \\
r_n^+     &\cdots    &r_n^+
\end{array} \right] \]
with $r_i^+=\max\{ 0,m_{ij}|j\neq i\}$ and $B^+ $ is a Nekrasov
matrix whose diagonal entries are all positive. Obviously, $B^+$ is
a $Z$-matrix and $C$ is a nonnegative matrix of rank $1$
\cite{Ber,Ga4}. Also in \cite{Ga4}, Garc\'ia-Esnaola and Pe\~na
provided the following error bound for LCP($M,q$) when $M$ is a
$B$-Nekrasov matrix.

\begin{theorem} \emph{\cite[Theorem 2]{Ga4}}
\label{B-N} Let $M=[m_{ij}]\in R^{n, n}$ be a $B$-Nekrasov matrix
such that for each $i=1,2,\ldots,n-1$  there exists $k>i$ with
$m_{ik}<\max \{0, m_{ij}|j\neq i\}=r_i^+,$ let $B^+$ be the matrix
of (\ref{dec}) and let $W = diag(w_1,\cdots,w_n )$ with
$w_i=\frac{h_i(B^+)}{m_{ii}-r_i^+}$ for $i=1,2\ldots,n-1$ and
$w_n=\frac{h_n(B^+)}{m_{nn}-r_n^+}+\varepsilon$, $\varepsilon \in
\left(0, 1- \frac{h_n(B^+)}{m_{nn}-r_n^+}\right)$, such that $
\bar{B}=B^+ W=[\bar{b}_{ij}]$ is a strictly diagonally dominant
$Z$-matrix. Let $ \beta_i= \bar{b}_{ii}-\sum\limits_{j\neq i}
|\bar{b}_{ij}|$ and $ \delta_i=\frac{\beta_i}{w_i}$ for $i\in N$,
and $\delta=\min\limits_{i\in N} \delta_i $. Then \begin{equation}
\label{bound_B_N1} \max\limits_{d\in
[0,1]^n}||(I-D+DM)^{-1}||_{\infty} \leq \frac{(n-1)\max\limits_{i\in
N} w_i}{\min\{\delta, 1\}\min\{w_i\}}.
\end{equation} \end{theorem}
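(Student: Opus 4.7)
The plan is to mirror the scheme pioneered by Pe\~na for $B$-matrices and extended by Garc\'ia-Esnaola and Pe\~na to $B$-Nekrasov matrices: peel $\tilde M=I-D+DM$ into a Nekrasov piece that becomes strictly diagonally dominant after scaling on the right by $W$, plus a nonnegative rank-one perturbation, and then combine a Varah-type estimate for the SDD $Z$-matrix part with an extra $(n-1)$ factor contributed by the rank-one positive perturbation.

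I would start from the decomposition $M=B^{+}+C$, where $C=r^{+}e^{T}$ with $r^{+}=(r_{1}^{+},\ldots,r_{n}^{+})^{T}$ and $e=(1,\ldots,1)^{T}$. Setting $\tilde B:=I-D+DB^{+}$, this gives the additive form $\tilde M=\tilde B+Dr^{+}e^{T}$, where the second summand is a rank-one nonnegative matrix. Because $B^{+}$ is a Nekrasov matrix with positive diagonal, Lemma~\ref{lem1} guarantees that $\tilde B$ is again Nekrasov and that $h_{i}(\tilde B)/\tilde b_{ii}\le h_{i}(B^{+})/b_{ii}^{+}=w_{i}$ for $i<n$, with the analogous inequality at $i=n$ handled by the $\varepsilon$ cushion in the definition of $w_{n}$.

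Next I would scale on the right by $W$ and compute the $i$-th row dominance of $\tilde BW$ directly. Using $\tilde b_{ii}=1-d_{i}+d_{i}b_{ii}^{+}$ and $\tilde b_{ij}=d_{i}b_{ij}^{+}$ for $i\neq j$,
\[
\tilde b_{ii}w_{i}-\sum_{j\ne i}|\tilde b_{ij}|w_{j}=(1-d_{i})w_{i}+d_{i}\Bigl(b_{ii}^{+}w_{i}-\sum_{j\ne i}|b_{ij}^{+}|w_{j}\Bigr)=(1-d_{i})w_{i}+d_{i}\beta_{i}.
\]
Since this is a convex combination of $w_{i}$ and $\beta_{i}=w_{i}\delta_{i}$, it is bounded below by $w_{i}\min\{1,\delta_{i}\}\ge(\min_{k}w_{k})\min\{1,\delta\}$ uniformly in $d\in[0,1]^{n}$. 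Combined with the fact that $b_{ij}^{+}\le 0$ for $i\neq j$ and $w_{j}>0$, this shows that $\tilde BW$ is an SDD $Z$-matrix with positive diagonal whose minimum row dominance is controlled from below independently of $D$.

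The final step is to translate this into a bound on $(\tilde MW)^{-1}$ and hence on $\tilde M^{-1}=W(\tilde MW)^{-1}$. Writing $\tilde MW=\tilde BW+Dr^{+}w^{T}$ with $w=(w_{1},\ldots,w_{n})^{T}$, I would apply the Pe\~na-type estimate for SDD $Z$-matrices with positive diagonal plus a nonnegative rank-one perturbation in a single column direction, which yields $\|(\tilde MW)^{-1}\|_{\infty}\le (n-1)/[(\min_{k}w_{k})\min\{1,\delta\}]$. Multiplying by $\|W\|_{\infty}=\max_{k}w_{k}$ then produces exactly the stated bound. I expect the main obstacle to be this last estimate: extracting the $(n-1)$ factor requires a careful Sherman--Morrison-style row-sum analysis that simultaneously exploits the $Z$-matrix sign pattern of $\tilde BW$ and the fact that $Dr^{+}w^{T}$ is nonnegative with every column parallel to the fixed vector $w$, and one must verify that the resulting bound holds uniformly over $d\in[0,1]^{n}$, which is precisely what the uniform dominance lower bound from the previous paragraph supplies.
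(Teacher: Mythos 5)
First, a point of reference: the paper does not prove this statement itself---it is quoted from \cite[Theorem 2]{Ga4}---so the only internal trace of the argument is inequality (\ref{eq4.1}) in the proof of Theorem \ref{main2}, which records the key step. Your overall architecture (split $\tilde{M}=\tilde{B}^{+}+DC$ with $\tilde{B}^{+}=I-D+DB^{+}$, show the right-scaled Nekrasov part is SDD uniformly in $d$, use Varah, and extract an $(n-1)$ factor from the nonnegative rank-one part) is the correct one, and your computation that the $i$-th row dominance of $\tilde{B}^{+}W$ equals $(1-d_i)w_i+d_i\beta_i\ge(\min_k w_k)\min\{1,\delta\}$ is exactly the uniform-in-$d$ estimate the argument needs.

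The gap is in your final step. The $(n-1)$ factor comes from the lemma $\|(I+P)^{-1}\|_\infty\le n-1$, valid for a nonnegative $P$ all of whose columns are \emph{equal}; this applies to $P=(\tilde{B}^{+})^{-1}DC=(\tilde{B}^{+})^{-1}Dr^{+}e^{T}$ in the \emph{unscaled} factorization $\tilde{M}=\tilde{B}^{+}\bigl(I+(\tilde{B}^{+})^{-1}DC\bigr)$, which is how (\ref{eq4.1}) is obtained. You instead scale first and split $\tilde{M}W=\tilde{B}^{+}W+Dr^{+}w^{T}$, so the relevant perturbation is $P=(\tilde{B}^{+}W)^{-1}Dr^{+}w^{T}=uw^{T}$, whose columns are $w_{j}u$---parallel but not equal. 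For such $P$ the bound $\|(I+P)^{-1}\|_\infty\le n-1$ is false in general: by Sherman--Morrison the $i$-th absolute row sum of $(I+uw^{T})^{-1}$ is $1+\frac{u_{i}\left(\sum_{j\ne i}w_{j}-w_{i}\right)}{1+w^{T}u}$, and already for $n=2$, $w=(1,2)^{T}$, $u=(t,0)^{T}$ this tends to $2>n-1$ as $t\to\infty$. Hence the intermediate estimate $\|(\tilde{M}W)^{-1}\|_{\infty}\le (n-1)/[(\min_{k}w_{k})\min\{1,\delta\}]$ that you invoke is not available, and you correctly sensed that this is where the difficulty lies but did not resolve it. The repair is simply to reverse the order of the two operations: first extract the $(n-1)$ factor from $\tilde{M}=\tilde{B}^{+}+DC$ (equal columns, $(\tilde{B}^{+})^{-1}\ge0$ since $\tilde{B}^{+}$ is a Nekrasov $Z$-matrix with positive diagonal), obtaining $\|\tilde{M}^{-1}\|_\infty\le(n-1)\|(\tilde{B}^{+})^{-1}\|_\infty$ as in (\ref{eq4.1}), and only then write $\|(\tilde{B}^{+})^{-1}\|_\infty=\|W(\tilde{B}^{+}W)^{-1}\|_\infty\le(\max_{k}w_{k})\,\|(\tilde{B}^{+}W)^{-1}\|_\infty$ and apply Varah's bound together with your dominance estimate.
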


Remark here that the bound (\ref{bound_B_N1}) in Theorem \ref{B-N}
has some drawbacks because it is involved with a parameter
$\varepsilon $ in the interval $\left(0, 1-
\frac{h_n(B^+)}{m_{nn}-r_n^+}\right)$ and it is not easy to decide
the optimum value of $\varepsilon $ in general. Based on the results
obtained in Section 2, we next give a new bound, which only depends
on the entries of $M $, for $\max\limits_{d\in
[0,1]^n}||(I-D+DM)^{-1}||_{\infty}$ when $ M$ is a $B$-Nekrasov
matrix.

\begin{theorem}\label{main2}  Let $M=[m_{ij}]\in R^{n\times n}$ be a $B$-Nekrasov matrix, and let $B^+=[b_{ij}]$ be the matrix
of (\ref{dec}).  Then \begin{equation} \label{bound_B_N2}
\max\limits_{d\in [0,1]^n}||(I-D+DM)^{-1}||_{\infty} \leq
\max\limits_{i\in N}
\frac{(n-1)\eta_i(B^+)}{\min\{b_{ii}-h_i(B^+),1\}}, \end{equation}
where $\eta_1(B^+)=1$, and \[ \eta_i(B^+)=\sum\limits_{j=1}^{i-1}
\frac{|b_{ij}|}{\min\{b_{jj},1\}}\eta_j(B^+)+1, ~i=2,3\ldots,n.\]
\end{theorem}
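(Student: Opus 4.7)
The plan is to transfer the analysis from $M$ to its Nekrasov $Z$-matrix part $B^+$ via the decomposition $M=B^++C$ in (\ref{dec}), where $C=r^+e^T$ is a rank-one nonnegative matrix with $r^+=(r_1^+,\ldots,r_n^+)^T$ and $e=(1,\ldots,1)^T$. For $D=\mathrm{diag}(d_i)$ with $0\leq d_i\leq 1$, set $\tilde{M}=I-D+DM$ and $\tilde{B}=I-D+DB^+$, so that $\tilde{M}=\tilde{B}+(Dr^+)e^T$. The whole argument rests on handling the M-matrix piece $\tilde{B}$ with the tools of Section 2 and absorbing the rank-one correction separately.

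First I would verify that $\tilde{B}$ is an $M$-matrix: its off-diagonal entries $d_ib_{ij}$ are nonpositive because $B^+$ is a $Z$-matrix, and Lemma \ref{lem1} applied to $B^+$ shows that $\tilde{B}$ is Nekrasov with positive diagonal. Hence $\tilde{B}^{-1}\geq 0$, and the Sherman--Morrison identity gives
\[
\tilde{M}^{-1}=\tilde{B}^{-1}-\frac{\tilde{B}^{-1}(Dr^+)\,e^T\tilde{B}^{-1}}{1+e^T\tilde{B}^{-1}(Dr^+)},
\]
whose denominator is at least $1$ because $\tilde{B}^{-1}(Dr^+)\geq 0$. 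Applying Theorem \ref{main1} to $B^+$ next yields the $D$-independent estimate
\[
\|\tilde{B}^{-1}\|_\infty\leq \max_{i\in N}\frac{\eta_i(B^+)}{\min\{b_{ii}-h_i(B^+),1\}}.
\]

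The main obstacle is extracting the factor $(n-1)$: I need a row-wise estimate that passes from $\|\tilde{B}^{-1}\|_\infty$ to $\|\tilde{M}^{-1}\|_\infty$ at the cost of that multiplicative $(n-1)$. Writing each row of $\tilde{M}^{-1}$ from the Sherman--Morrison identity and using nonnegativity of $\tilde{B}^{-1}$ together with positivity of the scalar denominator, a careful comparison of absolute row sums produces the standard $B$-matrix-type inequality
\[
\|\tilde{M}^{-1}\|_\infty\leq (n-1)\|\tilde{B}^{-1}\|_\infty,
\]
which mirrors the reasoning used for $B$-matrices in \cite{Pena1,Ga,Ga4}. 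Combining this with the Theorem \ref{main1} bound above and taking the maximum over $d\in[0,1]^n$ yields (\ref{bound_B_N2}). Verifying this $(n-1)$ inequality, in particular keeping track of the sign pattern of $\delta_{ij}-(\tilde{B}^{-1}(Dr^+))_i(e^T\tilde{B}^{-1})_j/(1+e^T\tilde{B}^{-1}(Dr^+))$ row by row, is the technical heart of the proof.
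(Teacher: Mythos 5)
Your proposal is correct and follows essentially the same route as the paper: write $\tilde M=\tilde B^{+}+DC$ with $\tilde B^{+}=I-D+DB^{+}$, use Lemma \ref{lem1} and Theorem \ref{main1} to bound $\|(\tilde B^{+})^{-1}\|_\infty$ by $\max_{i\in N}\eta_i(B^+)/\min\{b_{ii}-h_i(B^+),1\}$, and pay a factor $n-1$ for the rank-one correction. The only difference is cosmetic: the paper simply cites \cite[Theorem 2]{Ga4} for $\|\tilde M^{-1}\|_\infty\le (n-1)\|(\tilde B^{+})^{-1}\|_\infty$, whereas you sketch the same fact via Sherman--Morrison; that route does close, since with $u=(\tilde B^{+})^{-1}Dr^{+}\ge 0$ one gets $\left\|\left(I+ue^T\right)^{-1}\right\|_\infty=\max_{i}\left(1+(n-2)\frac{u_i}{1+e^Tu}\right)\le n-1$.
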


\begin{proof} Since $M$ is a $B$-Nekrasov matrix, $M = B^+ + C $ as in (\ref{dec}), with $B^+$ being a Nekrasov $Z$-matrix with positive
diagonal entries. Given a diagonal matrix $D = diag(d_i ),$ with $0
\leq d_i \leq 1$, we have $\tilde{M}= I -D + DM= (I-D + DB^+ ) + DC
= \tilde{B}^+ + \tilde{C},$ where $\tilde{B}^+ = I -D + DB^+ $ and
$\tilde{C} = DC$. By Theorem 2 in \cite{Ga4}, we can easily have
\begin{equation}\label{eq4.1}
||\tilde{M}^{-1}||_\infty \leq ||\left(I +
(\tilde{B}^+)^{-1}\tilde{C}\right)^{-1} ||_\infty ||(\tilde{B}^+
)^{-1} ||_\infty \leq (n-1) ||(\tilde{B}^+ )^{-1} ||_\infty.
\end{equation}

Next, we give a upper bound for $ ||(\tilde{B}^+ )^{-1} ||_\infty$.
Note that $B^+$ is a Nekrasov matrix and $\tilde{B}^+= I -D + DB^+
$. By Lemma \ref{lem1}, $\tilde{B}^+$ is also a Nekrasov matrix. By
Theorem \ref{main1}, we easily get
\begin{equation}\label{eq4.2}
||(\tilde{B}^+)^{-1} ||_\infty  \leq \max\limits_{i\in N}
\frac{\eta_i(B^+)}{\min\{b_{ii}-h_i(B^+),1\}}.
\end{equation} From (\ref{eq4.1})  and  (\ref{eq4.2}), the
conclusion follows.
\end{proof}

\begin{example} \label{ex3} Consider the following matrix \[M =
\left[ \begin{array}{cccc}
  1  &\frac{1}{3}   &\frac{1}{3} &\frac{1}{2} \\
 \frac{1}{5}   &1   &- \frac{2}{5} & \frac{1}{5}   \\
 -1   &0    &1 &-\frac{1}{6}\\
 \frac{3}{4} & \frac{3}{4} &\frac{1}{2}  & 1
\end{array} \right].\]
It is not difficult to  check that  $M$ is not an $H$-matrix,
consequently, not a Nekrasov matrix, so we cannot use the bounds in
\cite{Ga0}, and bounds in Theorems \ref{G-Nek} and \ref{main1}.  On
the other hand, $M$ can be written $M=B^++C$ as in (\ref{dec}), with
\[B^+ =
\left[ \begin{array}{cccc}
  \frac{1}{2} &-\frac{1}{6}  &-\frac{1}{6} &0 \\
 0   &\frac{4}{5}    &-\frac{3}{5} &0   \\
  -1   &0    &1 &-\frac{1}{6}\\
 0 & 0 &- \frac{1}{4}  &  \frac{1}{4}
\end{array}\right],~ C =
\left[ \begin{array}{cccc}
  \frac{1}{2}  &\frac{1}{2}  &\frac{1}{2} &\frac{1}{2} \\
\frac{1}{5}    &  \frac{1}{5}    & \frac{1}{5}   & \frac{1}{5}    \\
 0   &0    &0 &0\\
\frac{3}{4} & \frac{3}{4}&\frac{3}{4} & \frac{3}{4}
\end{array} \right].\]
Obviously, $B^+$ is not strictly diagonally dominant and $M$ is not
a $B$-matrix, so we cannot apply the bound in \cite{Ga}.  However,
$B^+$ is a Nekrasov matrix and so $M$ is a $B$-Nekrasov matrix. The
diagonal matrix $W$ of Theorem \ref{B-N} is given by
\[W=diag\left(\frac{2}{3},\frac{3}{4},\frac{5}{6},\frac{5}{6}+\varepsilon \right)\]
with $ \varepsilon \in (0,\frac{1}{6})$. Hence, by Theorem \ref{B-N}
we can get the bound (\ref{bound_B_N1}) involved with
$\varepsilon\in (0,\frac{1}{6}) $ for $ \max\limits_{d\in
[0,1]^n}||(I-D+DM)^{-1}||_{\infty}$, which is drawn in Figure 2.
Meanwhile, by Theorem \ref{main2}, we can get the bound
(\ref{bound_B_N2}) for $ \max\limits_{d\in
[0,1]^n}||(I-D+DM)^{-1}||_{\infty}$, is 126.0000. It is easy to see
from Figures 2 and 3 that the bound in Theorem \ref{main2} is
smaller than that in Theorem \ref{B-N} (Theorem 2 in \cite{Ga4}).

\begin{figure}[tp]\label{Fig2} \centering
\includegraphics[width=4in]{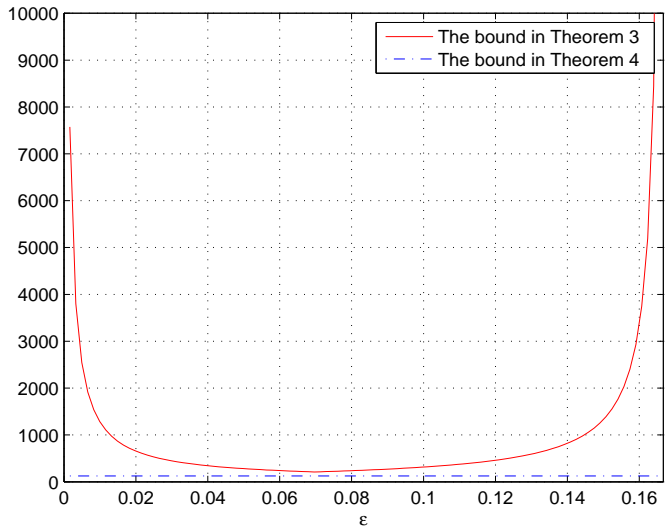}
\caption{The bounds in Theorems \ref{B-N} and \ref{main2}}
\end{figure}

\begin{figure}[tp]\label{Fig3} \centering
\includegraphics[width=4in]{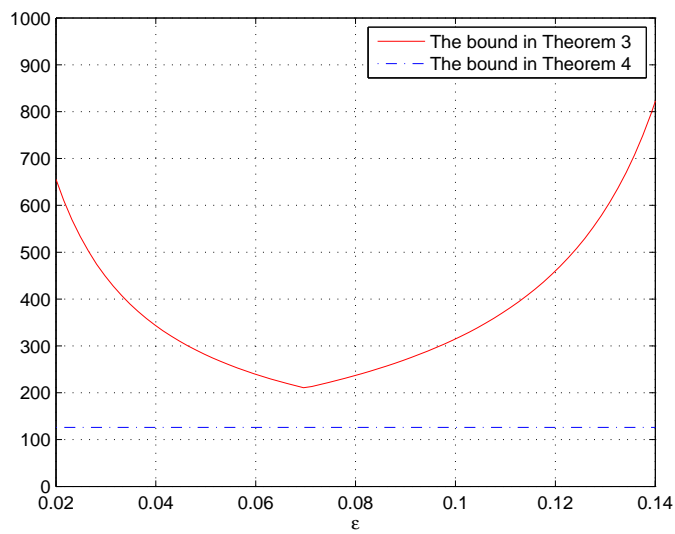}
\caption{The bounds in Theorems \ref{B-N} and \ref{main2} with
$\varepsilon \in [0.02,0.14]$ }
\end{figure}
\end{example}

\begin{example} \label{ex4} Consider the following matrix \[M =
\left[ \begin{array}{cccc}
  1  &\frac{1}{2}   &\frac{1}{2} &\frac{1}{2} \\
 \frac{1}{5}   &1   &- \frac{2}{5} & \frac{1}{5}   \\
 -1   &0    &1 &-\frac{1}{6}\\
 \frac{3}{4} & \frac{3}{4} &\frac{1}{2}  & 1
\end{array} \right].\]
And  $M$ can be written $M=B^++C$ as in (\ref{dec}), with
\[B^+ =
\left[ \begin{array}{cccc}
  \frac{1}{2} &0  &0 &0 \\
 0   &\frac{4}{5}    &-\frac{3}{5} &0   \\
  -1   &0    &1 &-\frac{1}{6}\\
 0 & 0 &- \frac{1}{4}  &  \frac{1}{4}
\end{array}\right],~ C =
\left[ \begin{array}{cccc}
  \frac{1}{2}  &\frac{1}{2}  &\frac{1}{2} &\frac{1}{2} \\
\frac{1}{5}    &  \frac{1}{5}    & \frac{1}{5}   & \frac{1}{5}    \\
 0   &0    &0 &0\\
\frac{3}{4} & \frac{3}{4}&\frac{3}{4} & \frac{3}{4}
\end{array} \right].\]
By computations,  \[h_1(B^+) =0, h_2(B^+) =\frac{3}{5},~h_3(B^+)
=\frac{1}{6},~h_4(B^+) =\frac{1}{24}.\] Obviously, $B^+$ is a
Nekrasov matrix and then $M$ is a  $B$-Nekrasov matrix. Since for
any $k>1$, $m_{1k}=r_1^+=\frac{1}{2} $, we cannot use the bound of
Theorem \ref{B-N} (Theorem 2 in \cite{Ga4}). However, by Theorem
\ref{main2}, we have
\[ \max\limits_{d\in [0,1]^n}||(I-D+DM)^{-1}||_{\infty} \leq  \frac{126}{5}.\]
\end{example}


\begin{acknowledgements}
 The authors would like to thank the
anonymous referees  and Prof. Lei Gao for their valuable suggestions
and comments to improve the original manuscript.

\end{acknowledgements}



\end{document}